\documentclass[reqno]{amsart}

\usepackage{microtype}
\usepackage{amsmath,amssymb}
\usepackage[hidelinks]{hyperref}

\allowdisplaybreaks

\newtheorem{theorem}{Theorem}
\newtheorem{proposition}[theorem]{Proposition}
\newtheorem{lemma}[theorem]{Lemma}

\DeclareMathOperator{\conv}{conv}
\DeclareMathOperator{\ext}{ext}
\DeclareMathOperator{\Vol}{Vol}

\newcommand{\R}{\mathbb{R}}
\newcommand{\Z}{\mathbb{Z}}
\newcommand{\arxiv}[1]{\href{https://arxiv.org/abs/#1}{arXiv:#1}}
\newcommand{\doi}[1]{\href{https://doi.org/#1}{doi:#1}}

\title[Sharp bounds for the layer number of grids]
  {Sharp bounds for the layer number of integer grids}
\author{Shiyu Yan}
\address{University of Regensburg, 93040 Regensburg, Germany}
\email{math.shiyu.yan@gmail.com}
\date{}

\subjclass[2020]{Primary 52C45; Secondary 52C07, 52B20}
\keywords{convex layers, convex-hull peeling, layer number, integer grid,
  lattice polytope, Minkowski sum}

\hypersetup{
  pdftitle={Sharp bounds for the layer number of integer grids},
  pdfauthor={Shiyu Yan},
  pdfsubject={Convex layers of integer grids},
  pdfkeywords={convex layers, convex-hull peeling, layer number, integer grid,
    lattice polytope, Minkowski sum}
}

\begin{document}

\begin{abstract}
The layer number of a finite point set is the number of iterations needed to
delete it by repeatedly removing the vertices of its convex hull.  Ambrus,
Hsu, Peng, and Yan conjectured that the layer number of the
$d$-dimensional integer grid $\{1,\ldots,n\}^d$ is of order
$n^{2d/(d+1)}$ for every fixed $d$.  We prove this conjecture.  Let $P_i$ be
the convex hull of the point set remaining after $i$ steps, and let $Z_n$ be the
convex hull of the lattice points in the Euclidean ball of radius $n$.  For
every step that leaves a nonempty point set, the Minkowski sum
$P_{i+1}+Z_n$ contains no vertex of $P_i+Z_n$.  Integrality of normalized
lattice volume, together with the B\'ar\'any--Larman vertex estimate for $Z_n$,
gives a lower bound, independent of $i$, on the resulting volume decrease.
Summing over $i$ yields the matching upper bound, even when $P_i$ is
lower-dimensional.  For $d\ge2$, the same upper bound holds uniformly over
all nonempty subsets of $\{1,\ldots,n\}^d$.
\end{abstract}

\maketitle
\enlargethispage{2pt}

\section{Introduction}

Let $X\subset\R^d$ be finite.  In the convex-hull peeling process, all vertices
of the convex hull of the current point set are removed at each step.  The
number of steps required to delete $X$ is its \emph{layer number}, denoted by
$\tau(X)$.  Because each deletion changes the convex hull, an estimate for a
single hull does not directly control the total number of steps.

Following \cite{AmbrusHsuPengYan}, write
\[
  [n]^d=\{1,\ldots,n\}^d
\]
for the $d$-dimensional integer grid.  We reserve $[n]^d$ for this discrete
set; its convex hull is the continuous cube $[1,n]^d$.  For $n\ge 2$, the
initial hull is therefore a cube, whereas the hulls arising later have no
prescribed shape.  An upper bound must hold uniformly throughout the process.

Dalal proved that the expected layer number of $N$ independent uniform points
in a $d$-dimensional ball is $\Theta(N^{2/(d+1)})$ \cite{Dalal}.  Setting
$N=n^d$ suggests the exponent $2d/(d+1)$ for a grid with $n^d$ points.  In
dimension two this agrees with the theorem of Har-Peled and Lidick\'y
\cite{HarPeledLidicky} that $\tau([n]^2)=\Theta(n^{4/3})$.  This agreement
between the random and lattice settings led Ambrus, Hsu, Peng, and Yan to the
following conjecture \cite[Conjecture~1]{AmbrusHsuPengYan}:
\[
  \tau([n]^d)=\Theta\!\left(n^{2d/(d+1)}\right).
\]
The same paper established the lower bound in every fixed dimension
\cite[Theorem~1]{AmbrusHsuPengYan}.  For $d\ge 3$, it also proved the upper
bound $O(n^{d-9/11})$ \cite[Theorem~4]{AmbrusHsuPengYan}.  The layer number is
invariant under translations and positive scalings.  After centering $[n]^d$
and scaling it by $(n\sqrt d)^{-1}$, we obtain a subset of the closed Euclidean
unit ball with $n^d$ points and minimum separation
$(n\sqrt d)^{-1}=\Theta((n^d)^{-1/d})$.  The rescaled grids therefore satisfy
the hypotheses of Theorem~1.6 of Ambrus, Nielsen, and Wilson
\cite{AmbrusNielsenWilson}, which gives
$\tau([n]^d)=O_d(n^2)$.  Dillon and Varadarajan subsequently proved the
explicit estimate
\[
  \tau([n]^d)\le \frac d4 n^2+1
\]
\cite{DillonVaradarajan}.  Thus, for $d\ge 3$, the best general upper bound was
quadratic in $n$.  Rote, R\"uber, and Saghafian noted in 2024 that the
asymptotic number of layers of three- and higher-dimensional boxes was still
unknown \cite[Section~1.1]{RoteRuberSaghafian}.  The conjectured exponent
$2d/(d+1)$ is smaller than $2$.  We prove the matching upper bound
$O_d(n^{2d/(d+1)})$; together with the known lower bound, this proves
Conjecture~1 of \cite{AmbrusHsuPengYan}.

\begin{theorem}\label{thm:main}
For every integer $d\ge 1$ there are constants $c_d,C_d>0$ and an integer
$N_d\ge 1$, depending only on $d$, such that every integer $n\ge N_d$
satisfies
\[
  c_d n^{2d/(d+1)}
  \le \tau([n]^d)
  \le C_d n^{2d/(d+1)}.
\]
Equivalently, for every fixed $d\ge 1$,
\[
  \tau([n]^d)=\Theta\!\left(n^{2d/(d+1)}\right).
\]
\end{theorem}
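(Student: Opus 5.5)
The lower bound is \cite[Theorem~1]{AmbrusHsuPengYan}, so only the upper bound needs proof, and I will follow the strategy sketched in the abstract. Let $X_0=[n]^d$, let $X_{i+1}$ be $X_i$ minus the vertices of $P_i=\conv X_i$, and let $Z_n=\conv(\Z^d\cap nB^d)$. The potential is $\Phi_i=\Vol(P_i+Z_n)$. Initially $\Phi_0\le\Vol([1,n]^d+nB^d)=O_d(n^d)$, and $\Phi_i\ge0$ always. So it suffices to show that every step leaving a nonempty set decreases $\Phi$ by at least $\delta=c\,n^{d-2d/(d+1)}$, uniformly in $i$. Then $\tau\le 1+O(n^d)/\delta=O(n^{2d/(d+1)})$. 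The case $d=1$ is trivial, since $\tau([n])=\lceil n/2\rceil$, so I assume $d\ge2$ where convenient.

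\textbf{Step 1: $P_{i+1}+Z_n$ contains no vertex of $P_i+Z_n$.} Every vertex of a Minkowski sum has the form $v+z$, where $v$ and $z$ are the unique maximizers over $P_i$ and $Z_n$ of a common generic linear functional $\ell$. The vertex $v$ lies in $X_i\setminus X_{i+1}$. Since $P_{i+1}\subseteq P_i$ and $\ell$ has the unique maximizer $v$ on $P_i$, with $v\notin P_{i+1}$, we get $\max_{P_{i+1}}\ell<\ell(v)$. Hence $\max_{P_{i+1}+Z_n}\ell<\ell(v+z)$, so $v+z\notin P_{i+1}+Z_n$.

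\textbf{Step 2: each removed vertex cuts off lattice volume.} Both sums are lattice polytopes, since $X_i\subset\Z^d$ and $Z_n$ is a lattice polytope. When $P_i+Z_n$ is full-dimensional, normalized volume is integral, so each "cap" is substantial. More concretely, for each vertex $w$ of $Q=P_i+Z_n$, the region $Q\setminus(P_{i+1}+Z_n)$ contains a neighbourhood of $w$ in $Q$. I would bound this from below by the simplex spanned by $w$ and lattice points of $Q$ near $w$ along its edges. Since $P_{i+1}+Z_n\supseteq\conv(\text{the other lattice points})$, each vertex contributes a lattice simplex of volume at least $1/d!$.

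To avoid double counting, I would choose these simplices disjointly. The plan is to triangulate the cap region, $\conv(Q\cap\Z^d)$ minus $\conv$ of the remaining lattice points, into unimodular-or-larger lattice simplices, at least one per removed vertex. This gives $\Phi_i-\Phi_{i+1}\ge f_0(Q)/d!$.

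Next I need $f_0(Q)\ge f_0(Z_n)$. This holds because every normal cone of $Q$ refines a normal cone of $Z_n$: the normal fan of a Minkowski sum is the common refinement, so $Q$ has at least as many vertices as $Z_n$. By the Bárány--Larman estimate, $f_0(Z_n)\asymp n^{d(d-1)/(d+1)}$. Note $d-2d/(d+1)=d(d-1)/(d+1)$, which is exactly the needed $\delta$.

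\textbf{Main obstacle.} The delicate point is Step 2: turning "no vertex survives" into a volume loss of order one per vertex, disjointly over vertices, even when $P_i$ is lower-dimensional. Lower-dimensional $P_i$ is harmless because $Q$ is always full-dimensional, as $Z_n$ is. My first attempt would be the following. For each vertex $w$ of $Q$, take a generic functional $\ell_w$ in its normal cone. The cap $\{x\in Q:\ell_w(x)>\max_{P_{i+1}+Z_n}\ell_w\}$ contains $w$, and its lattice-point hull differs from that of the other points by a lattice polytope. The convex hull of $w$ together with its $d$ neighbouring lattice points in $Q$ is a lattice simplex of volume $\ge1/d!$, lying in $Q$ but, near $w$, outside $P_{i+1}+Z_n$.

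Disjointness of these simplices for distinct vertices is plausible when $n$ is large, because vertices of $Q$ are spread out along the directions of $Z_n$. Failing that, I would use $\Vol(Q)-\Vol(\conv(Q\cap\Z^d\setminus V))\ge |V|/d!$, via a pulling triangulation that removes vertices one at a time. Each pulled vertex's star contains a full-dimensional lattice simplex, so the volume drops by at least $1/d!$ per vertex.
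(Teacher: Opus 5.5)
Your plan is the paper's proof. It uses the potential $\Phi_i=\Vol(P_i+Z_n)$, Step~1 (which is Lemma~\ref{lem:expulsion}), the bound $f_0(P_i+Z_n)\ge f_0(Z_n)$, B\'ar\'any--Larman, and the cited lower bound of Ambrus--Hsu--Peng--Yan; your normal-fan refinement argument is the fan-theoretic form of Lemma~\ref{lem:vertex-count}. The one step that needs repair is Step~2.

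Your first idea there gives no quantitative bound. Only a neighbourhood of $w$ in the simplex $\conv(w,\text{nearby lattice points})$ is known to avoid $P_{i+1}+Z_n$, so the simplex's volume $1/d!$ does not bound the cap from below. Disjointness over different vertices is also unproved. Drop this idea and use your fallback chain, which is Lemma~\ref{lem:volume-gap} run in reverse.

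As written, however, the link between the chain and $\Phi_{i+1}$ has the inclusion backwards. You need
\[
  P_{i+1}+Z_n\subseteq Q':=\conv\bigl((Q\cap\Z^d)\setminus\ext(Q)\bigr).
\]
This is exactly what Step~1 provides: $P_{i+1}+Z_n$ is a lattice polytope inside $Q$ that contains no vertex of $Q$, so its vertices lie in $(Q\cap\Z^d)\setminus\ext(Q)$. With the $\supseteq$ you wrote, the cap estimate would go the wrong way.

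You also need full-dimensionality at the bottom of the chain. The inequality $\Vol(Q)-\Vol(Q')\ge f_0(Q)/d!$ is false in general: for a unimodular simplex $Q$ one has $Q'=\varnothing$. It holds here because every intermediate polytope contains $P_{i+1}+Z_n$, hence a translate of $Z_n$. So each of the $f_0(Q)$ strict inclusions is between full-dimensional lattice polytopes and raises $d!\Vol$ by a positive integer.

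A pulling triangulation with $v$ pulled first has star equal to the whole polytope, so it does not isolate the cap. The justification you want is either placing $v$ over a lattice triangulation of the smaller polytope, or simply integrality of normalized volume, as in the paper. With these corrections your argument is complete.
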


For $d\ge 3$, the new part of the theorem is the upper bound.  In fact, its
proof applies to every nonempty set $X\subset[n]^d$; the full-grid structure
is used only for the lower bound.  Let $P_i$ be the convex hull of the points
remaining from $X$ after $i$ peeling steps, and let $\Vol$ denote Euclidean
$d$-volume.  A direct attempt to track $\Vol(P_i)$ does not give a decrease of
the required order: the volume vanishes when $P_i$ is lower-dimensional, and
$P_i$ may have too few vertices even when it is full-dimensional.  We instead
fix the lattice polytope
\[
  Z_n=\conv(\Z^d\cap nB^d),
\]
where $\conv$ denotes convex hull and $B^d$ is the closed Euclidean unit ball.
Write $\beta_d=d(d-1)/(d+1)$.  The scale $n$ is natural: using $Z_r$ for
$r\ge R_d$ would give $\Vol(P_0+Z_r)=O_d((n+r)^d)$, while each transition
between nonempty residual sets decreases this volume by
$\Omega_d(r^{\beta_d})$.  Hence the resulting bound is
$O_d((n+r)^d/r^{\beta_d})$; taking $r\asymp n$ yields the desired exponent.

Set $K_i=P_i+Z_n$ and $\Phi_i=\Vol(K_i)$, and write $f_0(Q)$ for the number of
vertices of a polytope $Q$.  The B\'ar\'any--Larman estimate gives
$f_0(Z_n)=\Omega(n^{\beta_d})$ \cite[Theorem~1]{BaranyLarman}, whereas
$\Phi_0=O(n^d)$.  Every $K_i$ is full-dimensional, even if $P_i$ is not, and
$f_0(K_i)\ge f_0(Z_n)$.

If a direction uniquely exposes a vertex of $K_i$, the exposed-face formula
for Minkowski sums implies that it uniquely exposes a vertex of $P_i$.  This
grid point is deleted before $P_{i+1}$ is formed.  Consequently, $K_{i+1}$
contains no vertex of $K_i$, although $K_{i+1}\subset K_i$.  Starting with
$K_{i+1}$ and adjoining the vertices of $K_i$ one at a time gives $f_0(K_i)$
strict inclusions of lattice polytopes.  Because $d!\Vol(K)$ is an integer for
every full-dimensional lattice $d$-polytope $K$, each inclusion increases
volume by at least $1/d!$.  Hence
\[
  \Phi_i-\Phi_{i+1}
  \ge \frac{f_0(K_i)}{d!}
  \ge \frac{f_0(Z_n)}{d!}.
\]
Summing the volume decreases over $i$ gives
$O(n^{d-\beta_d})=O(n^{2d/(d+1)})$ steps for every such $X$.  For the lower
bound on the full grid, Andrews' estimate shows that at any step with a
nonsingleton remaining point set, at most $O(n^{\beta_d})$ points are removed.
The symmetries of the grid ensure that its nonsingleton residual hulls are
full-dimensional.  Since all $n^d$ grid points are eventually removed, the
matching lower bound follows.

\section{Preliminaries and notation}\label{sec:preliminaries}

We follow the notation of \cite{AmbrusHsuPengYan}.  For $Y\subset\R^d$, let
$\conv(Y)$ denote its convex hull.  For a polytope $P$, let $\ext(P)$ denote
its vertex set; for a finite set $Y$, abbreviate
$\ext(Y)=\ext(\conv(Y))$.  Starting from $X=X_0$, define
recursively
\[
  X_i=X_{i-1}\setminus\ext(X_{i-1})
  \qquad (i\ge 1).
\]
The layer number $\tau(X)$ is the smallest $i$ for which $X_i=\varnothing$.
Whenever the initial set $X_0=X$ is understood, put
\[
  P_i=\conv(X_i)
  \qquad (0\le i<\tau(X)).
\]
For a polytope $P$, write $f_0(P)=|\ext(P)|$ for its number of vertices.  We
use $\Vol(P)$ for ordinary Euclidean $d$-volume.
All asymptotic notation is for fixed $d$; its implied constants may depend
only on $d$.

We call a polytope with vertices in $\Z^d$ a \emph{lattice polytope}.  It is
\emph{full-dimensional} if its affine hull is $\R^d$.

For nonempty compact convex sets $A,B\subset\R^d$, their Minkowski sum is
\[
  A+B=\{a+b:a\in A,\ b\in B\}.
\]
If $A=\conv(A_0)$ and $B=\conv(B_0)$ for finite sets
$A_0,B_0\subset\Z^d$, then
\[
  A+B=\conv(A_0+B_0).
\]
Indeed, if $a=\sum_s\alpha_sa_s$ and $b=\sum_t\beta_tb_t$, then
$a+b=\sum_{s,t}\alpha_s\beta_t(a_s+b_t)$, which gives
$A+B\subset\conv(A_0+B_0)$.  The reverse inclusion follows because $A+B$ is
convex and contains $A_0+B_0$.  Thus a Minkowski sum of lattice
polytopes is again a lattice polytope.  For a full-dimensional lattice
$d$-polytope $P$, its normalized
lattice volume is
\[
  \Vol_{\Z}(P)=d!\Vol(P).
\]
Let $B^d$ denote the closed Euclidean unit ball and define
\[
  Z_r=\conv(\Z^d\cap rB^d).
\]
The following is the $k=0$ lower bound in Theorem~1 of B\'ar\'any and
Larman \cite{BaranyLarman}.

\begin{proposition}[B\'ar\'any--Larman]\label{prop:ball-hull}
For every fixed $d\ge 2$ there are constants $b_d>0$ and $R_d\ge 1$ such
that, for every real $r\ge R_d$,
\[
  f_0(Z_r)\ge b_d r^{\beta_d},
  \qquad
  \beta_d=\frac{d(d-1)}{d+1}.
\]
\end{proposition}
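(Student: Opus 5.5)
The proposition is quoted from B\'ar\'any and Larman, so the paper simply invokes it. If I were to reconstruct a proof, I would use the standard cap-covering argument for the convex hull of lattice points in a smooth convex body. The plan is to show that $Z_r$ is close to $rB^d$ in a precise sense, and then to count how many disjoint caps of $rB^d$ must each contain a vertex of $Z_r$.

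First I would fix the cap scale. A cap $C(u,h)=\{x\in rB^d:\langle x,u\rangle\ge r-h\}$ has base radius about $\sqrt{rh}$ and volume about $h(rh)^{(d-1)/2}$. I would choose $h=c\,r^{-(d-1)/(d+1)}$, so that the cap volume is a small constant, and then lower it further if necessary. Such a cap is then a thin slab-like body of height $h\to0$ and base radius $\sqrt{rh}=c' r^{1/(d+1)}\to\infty$. Next I would take a maximal family of directions $u_1,\dots,u_m$ whose caps at depth $Ah$ are pairwise disjoint. Disjointness forces the angular separation to be of order $\sqrt{h/r}$, so
\[
m\asymp (r/h)^{(d-1)/2}=r^{(d-1)/2\cdot(1+(d-1)/(d+1))}=r^{d(d-1)/(d+1)}=r^{\beta_d}.
\]

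The key step is to show that for a positive proportion of these directions the cap $C(u_j,Ah)$ contains a vertex of $Z_r$. The argument splits into two claims.
\begin{enumerate}
\item $Z_r$ reaches into the cap. The cap $C(u_j,Ah)$ contains a lattice point for most $j$, which places a point of $Z_r$ at depth at most $Ah$.
\item $Z_r$ cannot protrude uniformly. The inner parallel region at depth $h$ contains no vertices of $Z_r$ near direction $u_j$ unless $Z_r$ cuts in.
\end{enumerate}
Given a lattice point $p\in C(u_j,Ah)$, the face of $Z_r$ maximizing $\langle\cdot,u_j\rangle$ lies in that cap and therefore contains a vertex there. Since the caps are disjoint, distinct $j$ yield distinct vertices.

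The existence of a lattice point in a cap of volume $\gg1$ is the main obstacle. A convex body of large volume can still avoid $\Z^d$ if it is thin; Minkowski's theorem only guarantees lattice points for symmetric bodies. I would instead average over rotations of direction. The caps at depth $Ah$ tile a shell of volume about $m\cdot A^{(d+1)/2}$. The lattice points in that shell number, by the volume-to-lattice-count estimate for smooth bodies with error $O(r^{d-1})$ or better, at least a constant times the shell volume, once $A$ is large enough that the shell volume dominates the boundary error. This needs care, because the shell has width only $h$, smaller than $1$. Instead I would count lattice points in $rB^d\setminus(r-Ah')B^d$ with $Ah'$ of order $1$, rescaling the caps accordingly. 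That gives $\gg r^{d-1}$ points in the shell, spread over the caps, and a pigeonhole argument shows a positive fraction of caps are hit. Matching the exponents at this point, via the sharper cap analysis in B\'ar\'any--Larman's floating-body comparison $\Vol(rB^d\setminus Z_r)\asymp \Vol(rB^d)_{\text{wet part}}$, is where I expect the genuine work lies. In the paper I would therefore simply cite \cite[Theorem~1]{BaranyLarman}.
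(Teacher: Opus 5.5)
The paper gives no proof of this proposition. It quotes it as the $k=0$ case of \cite[Theorem~1]{BaranyLarman}, so your final decision to cite it is exactly what the paper does.

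Your reconstruction, however, does not establish the exponent $\beta_d$, and the gap is at the step you flagged. The skeleton is fine:
\begin{itemize}
\item if $C(u,\delta)=\{x\in rB^d:\langle x,u\rangle\ge r-\delta\}$ contains a lattice point, then $F_{Z_r}(u)\subset C(u,\delta)$, so the cap contains a vertex of $Z_r$;
\item pairwise disjoint caps give distinct vertices;
\item for $\delta\asymp r^{-(d-1)/(d+1)}$, a maximal disjoint family has $\asymp r^{\beta_d}$ members.
\end{itemize}
Your second claim is not needed. What is missing is a proof that a positive proportion of these caps contain lattice points; they have volume $\asymp 1$ but height $\delta\to0$.

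Your shell-counting substitute cannot supply this. At depth of order $1$ the counting does work, but the packing then has only $\asymp r^{(d-1)/2}$ caps. That gives $f_0(Z_r)\gtrsim r^{(d-1)/2}$, which is strictly weaker because $\beta_d-(d-1)/2=(d-1)^2/(2(d+1))>0$. At depth $\delta$, a lower bound on the total number of lattice points in the shell cannot be pigeonholed into many hit caps. The reason is that the number of lattice points in a thin cap is not controlled by its volume:
\begin{itemize}
\item For suitable $r$, a cap with normal $e_1$ contains all lattice points of a $(d-1)$-dimensional disk of radius $\asymp\sqrt{r\delta}$. That is about $\delta^{-1}$ times its volume.
\item For $r-\frac12\in\Z$, even caps with normal $e_1$ and volume $\asymp r^{(d-1)/2}$ lie strictly between the hyperplanes $x_1=\lfloor r\rfloor$ and $x_1=\lfloor r\rfloor+1$, and contain no lattice point.
\end{itemize}
So there is no per-cap upper bound for the pigeonhole step to use.

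One standard way to close the gap is the flatness theorem. A convex body $C$ with $C\cap\Z^d=\varnothing$ has lattice width at most a constant $w_d$; that is, $\max_{x\in C}\langle z,x\rangle-\min_{x\in C}\langle z,x\rangle\le w_d$ for some $z\in\Z^d\setminus\{0\}$. The cap $C(u,\delta)$ contains a $(d-1)$-disk of radius $\rho=\sqrt{2r\delta-\delta^2}$ orthogonal to $u$, and a segment of length $\delta$ parallel to $u$. Hence, if the cap is lattice-free and $\rho>w_d$, then $|z|\lesssim w_d/\delta$ and $\angle(u,\pm z)\lesssim w_d/(|z|\rho)$.

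Now sum the spherical measures $(w_d/(|z|\rho))^{d-1}$ over the integer vectors with $0<|z|\lesssim w_d/\delta$. This shows that the directions $u$ for which $C(u,\delta)$ is lattice-free form a set of normalized measure $\lesssim w_d^d/(\rho^{d-1}\delta)\asymp w_d^d/\Vol(C(u,\delta))$. That measure is at most $1/2$ once the cap volume is a sufficiently large constant. A maximal family of good directions with pairwise disjoint caps then has $\gtrsim(r/\delta)^{(d-1)/2}\asymp r^{\beta_d}$ members, and your face argument finishes the proof.
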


For $r\ge 1$, the points $0,\pm e_1,\ldots,\pm e_d$ belong to $Z_r$, where
$e_1,\ldots,e_d$ are the standard basis vectors.  Thus $Z_r$ is
full-dimensional.  It also satisfies
\[
  Z_r\subset[-r,r]^d.
\]

We also use Andrews' lattice-polytope estimate
\cite{Andrews}; see also Theorem~2 of \cite{BaranyLarman}.

\begin{proposition}[Andrews]\label{prop:andrews}
For every fixed $d\ge 2$ there is a constant $a_d>0$ such that every
full-dimensional lattice $d$-polytope $P$ satisfies
\[
  f_0(P)\le a_d\Vol(P)^{(d-1)/(d+1)}.
\]
\end{proposition}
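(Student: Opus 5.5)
This is Andrews' theorem, which the paper uses as a black box. If I had to supply a proof, I would follow the cap-covering strategy of B\'ar\'any and Larman. It reduces the global count to a local count inside caps of bounded volume, and then uses integrality for the local count.

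Let $P$ be a full-dimensional lattice $d$-polytope with $V=\Vol(P)$. If $V$ is bounded by a constant depending on $d$, Blichfeldt's inequality $|P\cap\Z^d|\le d!\Vol(P)+d$ already bounds $f_0(P)$, so assume $V$ is large. Put $\varepsilon=c/V$ for a small constant $c=c_d$, and consider the floating body $P(\varepsilon)$, i.e. the points of $P$ lying in no cap of volume less than $\varepsilon V$.

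\begin{enumerate}
\item \emph{Economic cap covering.} The B\'ar\'any--Larman economic cap covering theorem for convex bodies gives caps $C_1,\dots,C_m$ of $P$ with $\Vol(C_j)=O_d(\varepsilon V)=O_d(1)$ that cover $P\setminus P(\varepsilon)$. Combined with the standard bound $\Vol(P\setminus P(\varepsilon))=O_d(\varepsilon^{2/(d+1)}V)$ for polytopes, it yields
\[
  m=O_d\bigl(\varepsilon^{-(d-1)/(d+1)}\bigr)=O_d\bigl(V^{(d-1)/(d+1)}\bigr).
\]
\item \emph{Every vertex is covered.} Each vertex $v$ of $P$ is cut off by a hyperplane arbitrarily close to $v$, so it lies in caps of arbitrarily small volume. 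Hence $v\notin P(\varepsilon)$, and $v$ belongs to some $C_j$.
\item \emph{Local count.} It remains to show that each $C_j$ contains $O_d(1)$ vertices of $P$. Let $Q_j$ be the convex hull of the vertices of $P$ lying in $C_j$. Then $Q_j$ is a lattice polytope with $\Vol(Q_j)\le\Vol(C_j)=O_d(1)$. If $Q_j$ is full-dimensional, Blichfeldt's inequality gives
\[
  f_0(Q_j)\le|Q_j\cap\Z^d|\le d!\,\Vol(Q_j)+d=O_d(1).
\]
\end{enumerate}
Summing the local counts over $j\le m$ then gives $f_0(P)\le a_dV^{(d-1)/(d+1)}$.

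The main obstacle is step 3 when $Q_j$ is \emph{not} full-dimensional. Then the vertices in $C_j$ lie in a lattice hyperplane and may be numerous, since, for example, a cap can contain a large part of a facet. To handle this, I would first enlarge each cap slightly, to volume $O_d(1)$ still. If the vertices in a cap were all contained in one lattice hyperplane $H$, then $H$ must be the supporting hyperplane of a face of $P$ that is nearly tangent to the cap. I would therefore charge those vertices to $H\cap P$ and apply induction on dimension, using the $(d-1)$-dimensional Andrews bound in the lattice $H\cap\Z^d$. The needed input is a lower bound on the relative lattice volume of $H\cap P$ together with its relation to the cap volume: a lattice facet has relative volume at least $1/(d-1)!$, and the cap over it has width at least the lattice distance $1/\|w\|$ to the next layer. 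Controlling the total charged to all facets, so that it still sums to $O_d(V^{(d-1)/(d+1)})$, is the step I expect to require the most care. The fallback is to bound $\sum_F \mathrm{vol}_{d-1}(F)^{(d-2)/d}$ over facets $F$ by Hölder's inequality, using the surface-area/volume relation that holds after an affine normalization to John position. That normalization is harmless because the statement is invariant under unimodular maps only up to lattice structure, so this step would need checking.
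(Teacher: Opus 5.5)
\paragraph{The paper and your step~3.} The paper does not prove this proposition. It quotes it from Andrews and from Theorem~2 of B\'ar\'any--Larman, so treating it as a black box is exactly what the paper does. The argument you sketch, however, has a genuine gap for $d\ge 3$, at step~3, and the failing case is not exceptional. If $\conv(C\cap\Z^d)$ is full-dimensional, its normalized volume is a positive integer. Hence every cap $C$ with $\Vol(C)<1/d!$ has all its lattice points in one affine hyperplane. Since $\Vol(C_j)=O_d(c)$, once $c$ is small enough every $C_j$ is degenerate, and the Blichfeldt branch never applies. Nor can a volume bound on a cap give an $O_d(1)$ vertex count. Take $P=F\times[0,1]$ with $F=\conv(\Z^{d-1}\cap RB^{d-1})$ and $V=\Vol(P)$. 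The cap $\{x\in P:x_d\le c/V\}$ has volume $c$. Yet it contains all $f_0(F)$ vertices of $F\times\{0\}$, and $f_0(F)$ is of order $V^{(d-2)/d}$, hence unbounded for $d\ge3$. The sketch does go through for $d=2$, where a line contains at most two vertices of a convex polygon.

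\paragraph{The proposed repair.} It does not close the gap. Your claim that the hyperplane $H$ carrying the vertices of a degenerate cap must support a face of $P$ is false. Take $G=\conv(\{(j,j^2):0\le j\le k\}\cup\{(0,M),(k,M)\})$, $D=G\cap\{y\le kx\}$, $x_0=(1,M-1)$, and $P=\conv(G\times\{0\},(x_0,\pm1))\subset\R^3$.
\begin{itemize}
\item Over each $p\in D$ the fibre of $P$ has length at most $2\eta/\delta$. Here $\eta$ is the depth of $D$, and $\delta$, which grows linearly in $M$, is the distance from $x_0$ to the supporting line of $G$ parallel to $y=kx$.
\item So the cap $P\cap\{y\le kx\}$ has volume at most $2\eta\,\mathrm{area}(D)/\delta\to0$.
\item Yet this cap contains the $k+1$ vertices $(j,j^2,0)$, and all its lattice points lie in the plane $x_3=0$, which passes through the interior of $P$.
\end{itemize}
So the charging to a face of $P$ that you describe is not available.

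\paragraph{What else is missing.} Induction inside $H$ would need bounds on the relative lattice volumes of the sets $\conv(\ext P\cap C_j)$ that are summable over $j$. The sketch provides none. The H\"older fallback needs a bound on the number of facets, which is itself an Andrews-type statement. The John normalization is not unimodular, so it does not preserve the lattice quantities $\mathrm{vol}_{d-1}(F)/\|w_F\|$ on which your induction runs. What is missing is a global counting mechanism, and counting vertices cap by cap in an economic cap covering cannot supply it.
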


\section{Minkowski sums and lattice volume}\label{sec:lemmas}

For $u,x\in\R^d$, let $\langle u,x\rangle$ denote the standard Euclidean
inner product.  For a nonempty compact convex set $A\subset\R^d$, its support
function and its face exposed by $u\in\R^d$ are
\[
  h_A(u)=\max_{a\in A}\langle u,a\rangle,
  \qquad
  F_A(u)=\{a\in A:\langle u,a\rangle=h_A(u)\}.
\]
We say that $u$ \emph{uniquely exposes} $a\in A$ if $F_A(u)=\{a\}$.

\begin{lemma}[Faces of a Minkowski sum]\label{lem:faces}
For nonempty compact convex sets $A,B\subset\R^d$,
\[
  h_{A+B}(u)=h_A(u)+h_B(u),
  \qquad
  F_{A+B}(u)=F_A(u)+F_B(u).
\]
Consequently, $F_{A+B}(u)$ is a singleton if and only if both $F_A(u)$ and
$F_B(u)$ are singletons.
\end{lemma}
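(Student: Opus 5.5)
The plan is to verify both identities directly from the definitions, using that $A$ and $B$ are compact so all maxima are attained. For the support function, note that every point of $A+B$ has the form $a+b$ with $a\in A$, $b\in B$, and $\langle u,a+b\rangle=\langle u,a\rangle+\langle u,b\rangle\le h_A(u)+h_B(u)$. This gives $h_{A+B}(u)\le h_A(u)+h_B(u)$. For the reverse inequality, choose maximizers $a^*\in F_A(u)$ and $b^*\in F_B(u)$, which exist by compactness. The point $a^*+b^*\in A+B$ attains the value $h_A(u)+h_B(u)$, so equality holds.

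For the face identity, the inclusion $F_A(u)+F_B(u)\subset F_{A+B}(u)$ follows at once from the additivity just shown. A point $a+b$ with $a\in F_A(u)$ and $b\in F_B(u)$ lies in $A+B$ and attains $h_{A+B}(u)$. Conversely, let $x\in F_{A+B}(u)$ and write $x=a+b$ with $a\in A$ and $b\in B$. Then
\[
  h_A(u)+h_B(u)=\langle u,a\rangle+\langle u,b\rangle
  \le h_A(u)+h_B(u),
\]
and this chain is tight. Because $\langle u,a\rangle\le h_A(u)$ and $\langle u,b\rangle\le h_B(u)$ separately, tightness forces both to be equalities. Hence $a\in F_A(u)$ and $b\in F_B(u)$, so $x\in F_A(u)+F_B(u)$. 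The only point needing care is that the decomposition $x=a+b$ need not be unique. The argument handles this, since it applies to every decomposition.

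For the consequence, both faces $F_A(u)$ and $F_B(u)$ are nonempty by compactness. If both are singletons $\{a\}$ and $\{b\}$, then their sum is the singleton $\{a+b\}$. Conversely, suppose one of them, say $F_A(u)$, contains two distinct points $a\neq a'$. Pick any $b\in F_B(u)$. Then $a+b\neq a'+b$ are two distinct points of $F_{A+B}(u)$, so it is not a singleton. By symmetry the same holds if $F_B(u)$ has two points.

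No step here is a real obstacle. The only subtleties are invoking compactness so that the maxima exist and the faces are nonempty, and observing in the converse inclusion that an equality of sums of two one-sided bounds forces equality in each term.
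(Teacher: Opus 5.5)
Your proof is correct and takes the same approach as the paper. The paper also uses additivity $\langle u,a+b\rangle=\langle u,a\rangle+\langle u,b\rangle$: a pair $(a,b)\in A\times B$ is maximizing exactly when $a\in F_A(u)$ and $b\in F_B(u)$, and a sum of two nonempty sets is a singleton exactly when both summands are. You write out in full what the paper condenses into those two observations.
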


\begin{proof}
Since
\(
  \langle u,a+b\rangle=\langle u,a\rangle+\langle u,b\rangle,
\)
a pair $(a,b)\in A\times B$ is maximizing exactly when $a\in F_A(u)$ and
$b\in F_B(u)$.  This proves both identities.  The sum of two nonempty sets is
a singleton exactly when both sets are singletons.
\end{proof}

\begin{lemma}[Vertex count under Minkowski addition]\label{lem:vertex-count}
Let $Z\subset\R^d$ be a full-dimensional polytope, and let
$P\subset\R^d$ be any nonempty polytope, possibly lower-dimensional.  Then
\[
  f_0(P+Z)\ge f_0(Z).
\]
\end{lemma}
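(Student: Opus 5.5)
We will construct an injection from $\ext(Z)$ into $\ext(P+Z)$ by way of uniquely exposing directions and Lemma~\ref{lem:faces}.

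For each vertex $z\in\ext(Z)$, let $N_z=\{u\in\R^d: F_Z(u)=\{z\}\}$ be the interior of its normal cone. Since $Z$ is a polytope, $N_z$ is a nonempty open set. Moreover, the sets $N_z$ for distinct $z$ are pairwise disjoint, because $F_Z(u)$ cannot equal two different singletons.

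Next we handle $P$. The set of directions $u$ for which $F_P(u)$ is not a singleton is contained in the union of the finitely many hyperplanes $\{u:\langle u,p-q\rangle=0\}$, where $p\ne q$ range over the vertices of $P$. If $P$ is a single point, this set is empty. This union is closed and nowhere dense, so each open set $N_z$ contains some $u_z$ for which $F_P(u_z)$ is a singleton $\{p_z\}$. This step is what handles a lower-dimensional $P$: we never need $P$ to have full-dimensional normal cones, only that the bad directions form a finite union of hyperplanes.

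By Lemma~\ref{lem:faces}, $F_{P+Z}(u_z)=\{p_z+z\}$, so $p_z+z$ is a vertex of $P+Z$. It remains to check that the map $z\mapsto p_z+z$ is injective. Suppose $p_z+z=p_{z'}+z'=v$. Then $v$ is a vertex of $P+Z$ with $u_z$ in the interior of its normal cone. Take any $w$ in the open set $N_v=\{u:F_{P+Z}(u)=\{v\}\}$; it contains $u_z$. Lemma~\ref{lem:faces} gives $F_{P+Z}(w)=F_P(w)+F_Z(w)=\{v\}$, so $F_Z(w)$ is a singleton $\{z''\}$ and $v=p+z''$ for the unique $p\in F_P(w)$.

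The main subtlety is showing that this decomposition does not depend on the choice of $w$. The cleanest route is to avoid $w$ entirely. Evaluate at $u_z$ itself: $F_{P+Z}(u_z)=\{p_z+z\}$. Any decomposition $v=p+y$ with $p\in P$ and $y\in Z$ has
\[
\langle u_z,p\rangle+\langle u_z,y\rangle=h_P(u_z)+h_Z(u_z),
\]
so $p\in F_P(u_z)$ and $y\in F_Z(u_z)$. This forces $y=z$, meaning the decomposition of $v$ as a sum from $P\times Z$ is unique. Applying the same reasoning with $u_{z'}$ gives $y=z'$, so $z=z'$.

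The map $z\mapsto p_z+z$ is therefore an injection $\ext(Z)\to\ext(P+Z)$, and $f_0(P+Z)\ge f_0(Z)$. The only delicate point is the genericity choice of $u_z$ when $P$ is lower-dimensional, and it is settled by the nowhere-density argument above.
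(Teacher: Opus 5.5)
Your proof is correct and follows essentially the same route as the paper. You pick $u_z$ in the open cone of directions uniquely exposing $z$, avoiding the finitely many hyperplanes $\langle u,p-p'\rangle=0$ for distinct vertices of $P$, then apply Lemma~\ref{lem:faces} and get injectivity from the uniqueness of the decomposition of a vertex of $P+Z$. The only cosmetic difference is that you test the decomposition against $u_z$ and $u_{z'}$ rather than an arbitrary direction exposing the vertex; the abandoned paragraph about $w\in N_v$ is harmless and can simply be deleted.
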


\begin{proof}
Fix $z\in\ext(Z)$.  The directions uniquely exposing $z$ form a nonempty
open cone in $\R^d$, namely
\[
  \langle u,z-z'\rangle>0
  \qquad (z'\in\ext(Z),\ z'\ne z).
\]
Choose $u_z$ in this cone and outside the finitely many proper hyperplanes
\[
  \{u:\langle u,p-p'\rangle=0\},
  \qquad p\ne p'\in\ext(P).
\]
Such a choice exists because a finite union of proper hyperplanes has empty
interior.
Then $u_z$ uniquely exposes some $p_z\in\ext(P)$ and, by
Lemma~\ref{lem:faces}, uniquely exposes $p_z+z$ in $P+Z$.

It remains to check that these vertices are distinct.  Let $y\in\ext(P+Z)$,
and choose $w$ that uniquely exposes $y$.  By Lemma~\ref{lem:faces}, write
$F_P(w)=\{p\}$ and $F_Z(w)=\{z\}$.  If $y=p'+z'$ is any decomposition, then
equality in
\[
  \langle w,y\rangle
  \le h_P(w)+h_Z(w)
\]
forces $p'=p$ and $z'=z$.  Thus every vertex of $P+Z$ has a unique
decomposition as a sum of a point of $P$ and a point of $Z$, and the map
$z\mapsto p_z+z$ is injective.
\end{proof}

\begin{lemma}[Disjointness after one peeling step]\label{lem:expulsion}
Let $X\subset\R^d$ be finite and nonempty, and set
\[
  P=\conv(X),
  \qquad
  X'=X\setminus\ext(X),
  \qquad
  Q=\conv(X').
\]
Suppose $X'\ne\varnothing$.  If $Z$ is a full-dimensional polytope, then
\[
  \ext(P+Z)\cap(Q+Z)=\varnothing.
\]
\end{lemma}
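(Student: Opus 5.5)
The plan is to argue by contradiction via support functions.

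Fix a vertex $y\in\ext(P+Z)$ and choose a direction $w$ that uniquely exposes $y$ in $P+Z$. By Lemma~\ref{lem:faces}, $F_P(w)=\{p\}$ and $F_Z(w)=\{z\}$ are singletons with $y=p+z$. Since $w$ uniquely exposes $p$ in $P=\conv(X)$, the point $p$ is a vertex of $P$, so $p\in\ext(X)$. In particular $p\notin X'$.

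Now suppose, for contradiction, that $y\in Q+Z$, say $y=q+z'$ with $q\in Q$ and $z'\in Z$. Since $Q\subset P$, we have $q\in P$, and
\[
  \langle w,y\rangle=\langle w,q\rangle+\langle w,z'\rangle\le h_P(w)+h_Z(w)=h_{P+Z}(w)=\langle w,y\rangle.
\]
Hence equality holds throughout, forcing $q\in F_P(w)=\{p\}$ and $z'\in F_Z(w)=\{z\}$. So $p=q\in Q$. Because $F_P(w)=\{p\}$ and $Q\subset P$, we get $h_Q(w)=\langle w,p\rangle$ and $F_Q(w)=\{p\}$. This makes $p$ an exposed point, hence a vertex, of $Q=\conv(X')$.

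The remaining step is the only one needing care: every vertex of $\conv(X')$ lies in $X'$. This holds because a vertex of the convex hull of a finite set is one of its points. (Equivalently, if $p=\sum\lambda_j x_j$ with $x_j\in X'$, then maximizing $\langle w,\cdot\rangle$ forces every $x_j$ with $\lambda_j>0$ to lie in $F_P(w)=\{p\}$.) Thus $p\in X'$, contradicting $p\in\ext(X)$ and $X'=X\setminus\ext(X)$.

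The hypothesis $X'\ne\varnothing$ is used only so that $Q$ is a nonempty polytope and $Q+Z$ makes sense. Full-dimensionality of $Z$ is not needed for the argument; it matters only for the later volume application. There is no real obstacle. The point to state carefully is the uniqueness of the decomposition $y=p+z$, which already appears in the proof of Lemma~\ref{lem:vertex-count}, and which forces any representation of $y$ through $Q$ to use the deleted vertex $p$.
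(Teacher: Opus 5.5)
Your proof is correct and is essentially the paper's argument: both fix a direction uniquely exposing $y$, use Lemma~\ref{lem:faces} to write $y=p+z$ with $p\in\ext(X)$ a deleted point, and conclude from the fact that $p\notin Q$. The only difference is in the last step. The paper shows $p\notin Q$ directly and uses compactness to get $h_Q(u)<h_P(u)$, whereas you argue by contradiction, using the equality case to force any decomposition $y=q+z'$ to have $q=p$. You are also right that full-dimensionality of $Z$ is never used.
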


\begin{proof}
Let $y\in\ext(P+Z)$, and choose $u$ uniquely exposing $y$.  By
Lemma~\ref{lem:faces},
\[
  F_P(u)=\{p\},
  \qquad
  F_Z(u)=\{z\},
  \qquad
  y=p+z.
\]
Every vertex of the convex hull of a finite set belongs to that set.  Hence
$p\in\ext(P)\subset X$ and $p\notin X'$.  Moreover, extremality gives
\[
  p\notin\conv(X\setminus\{p\}),
\]
so $p\notin Q$.  Since $Q\subset P$ is compact and $p$ is the unique
maximizer of $u$ on $P$,
\[
  h_Q(u)<h_P(u).
\]
It follows that
\[
  h_{Q+Z}(u)=h_Q(u)+h_Z(u)
  <h_P(u)+h_Z(u)=\langle u,y\rangle.
\]
Thus $y\notin Q+Z$.
\end{proof}

\begin{lemma}[Lattice-volume gap]\label{lem:volume-gap}
Let $L\subset K$ be full-dimensional lattice $d$-polytopes.  If $L$ contains
no vertex of $K$, then
\[
  \Vol(K)-\Vol(L)\ge \frac{f_0(K)}{d!}.
\]
\end{lemma}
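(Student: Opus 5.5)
The plan is to interpolate between $L$ and $K$ by adjoining the vertices of $K$ one at a time, as sketched in the introduction. Enumerate $\ext(K)=\{v_1,\ldots,v_m\}$ with $m=f_0(K)$. Set
\[
  L_0=L,\qquad L_j=\conv(L\cup\{v_1,\ldots,v_j\})\quad(1\le j\le m).
\]
Each $L_j$ is a lattice polytope, since its vertices lie among the vertices of $L$ and the $v_k$. Each $L_j$ is full-dimensional because it contains $L$. The chain satisfies $L_j\subset K$. At the end, $L_m=K$, because $L_m\subset K$ and $L_m$ contains $\conv(\ext(K))=K$. Then
\[
  \Vol(K)-\Vol(L)=\sum_{j=1}^m\bigl(\Vol(L_j)-\Vol(L_{j-1})\bigr),
\]
so it suffices to show that each increment is at least $1/d!$.

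For a single increment, the key claim is that $v_j\notin L_{j-1}$. Suppose instead $v_j\in L_{j-1}=\conv(L\cup\{v_1,\ldots,v_{j-1}\})$. Then $v_j$ is a convex combination of points of $K$ other than $v_j$, which requires a little care. Since $v_j$ is a vertex of $K$, there is a direction $u$ uniquely exposing $v_j$ in $K$. The maximum of $u$ over $L_{j-1}$ would then be attained at $v_j$. Because $L_{j-1}$ is the convex hull of the finite set consisting of $\ext(L)$ and $v_1,\ldots,v_{j-1}$, the maximum of $u$ is also attained at one of these generators. That generator lies in $K$, so it must equal $v_j$. It cannot be some $v_k$ with $k<j$, since those are distinct vertices of $K$. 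It cannot be a vertex of $L$, because $L$ contains no vertex of $K$. This is a contradiction. Hence $L_{j-1}\subsetneq L_j$ is a strict inclusion of full-dimensional polytopes, and so $\Vol(L_j)>\Vol(L_{j-1})$. Strictness of volume holds because if two convex bodies $A\subset B$ have equal volume then $A=B$: a point of $B\setminus A$ together with the interior of $A$ spans an open region of $B$ outside $A$.

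Integrality then finishes the argument. For a full-dimensional lattice polytope, $d!\Vol$ is a positive integer, since it can be triangulated into lattice simplices, each of which has volume $|\det|/d!$ with an integer determinant. Consequently $d!\Vol(L_j)-d!\Vol(L_{j-1})$ is a positive integer, hence at least $1$. Summing over $j=1,\ldots,m$ gives the bound $f_0(K)/d!$.

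The only genuinely delicate step is the nonmembership claim $v_j\notin L_{j-1}$. It uses the hypothesis on $L$ together with the distinctness of the vertices of $K$. The support-function argument above handles it, and everything else is routine.
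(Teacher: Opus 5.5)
Your proposal is correct and follows essentially the same route as the paper. You use the same chain $\conv(L\cup\{v_1,\dots,v_j\})$, strictness of each inclusion from the hypothesis on $L$ and the extremality of $v_j$, and integrality of $d!\Vol$ via a lattice triangulation; your support-function argument for $v_j\notin L_{j-1}$ is a more explicit version of the paper's remark that $v_j$ would otherwise be a convex combination of points of $K\setminus\{v_j\}$.
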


\begin{proof}
Order the vertices of a polytope $R$.  Recursively triangulate every facet not
containing the first vertex, using the induced order, and cone the resulting
facet simplices to the first vertex.  This pulling triangulation uses only the
vertices of $R$.  Induction on the dimension shows that the induced
triangulations agree on common faces and that these cones form a triangulation
of $R$.  Hence, when $R$ is a full-dimensional lattice $d$-polytope, it is
decomposed into lattice $d$-simplices.  If $v_0,\ldots,v_d$ are the vertices
of one such simplex, its normalized volume is the positive integer
\[
  \left|\det(v_1-v_0,\ldots,v_d-v_0)\right|\in\Z_{>0}.
\]
Summing over the simplices shows that $\Vol_{\Z}(R)$ is a positive integer.

If $R\subsetneq S$ are full-dimensional compact convex sets, then
$\Vol(S)>\Vol(R)$.  Indeed, choose $x\in S\setminus R$ and a hyperplane that
strictly separates $x$ from $R$, and let
$B\subset\operatorname{int}R$ be a $d$-dimensional ball.
Then $\conv(B\cup\{x\})\subset S$, and its part beyond the separating
hyperplane has positive volume and lies in $S\setminus R$.  Consequently,
strict inclusion between full-dimensional lattice polytopes increases
normalized volume by at least $1$, and ordinary volume by at least $1/d!$.

Order $\ext(K)=\{v_1,\ldots,v_m\}$ and put
\[
  K_j=\conv\bigl(L\cup\{v_1,\ldots,v_j\}\bigr),
  \qquad 0\le j\le m,
\]
where $K_0=L$.  Every $K_j$ is a full-dimensional lattice polytope.  Moreover,
$v_j\notin L$ by hypothesis.  If $v_j\in K_{j-1}$, then the vertex $v_j$ of
$K$ would be a convex combination of points of $K\setminus\{v_j\}$, a
contradiction.  Thus all $m=f_0(K)$
inclusions are strict.  Since $K_m=K$, summing their volume increments proves
the assertion.
\end{proof}

\section{The upper bound}\label{sec:upper}

\begin{theorem}\label{thm:upper}
For every fixed $d\ge 2$, there are constants $C_d>0$ and $N_d\ge1$ such
that every integer $n\ge N_d$ and every nonempty set $X\subset[n]^d$ satisfy
\[
  \tau(X)\le C_d n^{2d/(d+1)}.
\]
\end{theorem}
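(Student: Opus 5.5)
We follow the potential-function argument sketched in the introduction, with $Z=Z_n$ fixed once and for all.

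\textbf{Setup.} Take $N_d\ge R_d$, so that Proposition~\ref{prop:ball-hull} applies with $r=n$ and gives $f_0(Z_n)\ge b_dn^{\beta_d}$. Let $X\subset[n]^d$ be nonempty and $\tau=\tau(X)$. For $0\le i<\tau$ set
\[
  K_i=P_i+Z_n,\qquad \Phi_i=\Vol(K_i).
\]
Each $P_i$ is the convex hull of a finite subset of $\Z^d$, so by the Minkowski-sum identity in Section~\ref{sec:preliminaries}, $K_i$ is a lattice polytope. It is full-dimensional because it contains a translate of the full-dimensional polytope $Z_n$. Since $X_{i+1}\subset X_i$, we have $K_{i+1}\subset K_i$.

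\textbf{Per-step volume decrease.} Fix $0\le i\le\tau-2$, so that $X_{i+1}\ne\varnothing$.
\begin{itemize}
  \item Lemma~\ref{lem:expulsion}, applied with $X=X_i$ and $Z=Z_n$, shows that $K_{i+1}$ contains no vertex of $K_i$.
  \item Lemma~\ref{lem:volume-gap}, applied with $L=K_{i+1}$ and $K=K_i$, then gives $\Phi_i-\Phi_{i+1}\ge f_0(K_i)/d!$.
  \item Lemma~\ref{lem:vertex-count} gives $f_0(K_i)\ge f_0(Z_n)$.
\end{itemize}
Combining these with the B\'ar\'any--Larman bound,
\[
  \Phi_i-\Phi_{i+1}\ge \frac{b_d n^{\beta_d}}{d!}.
\]

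\textbf{Summing.} Telescope over $i=0,\dots,\tau-2$ and use $\Phi_{\tau-1}\ge 0$:
\[
  (\tau-1)\,\frac{b_dn^{\beta_d}}{d!}\le \Phi_0 .
\]
For the upper bound on $\Phi_0$, note $P_0\subset[1,n]^d$ and $Z_n\subset[-n,n]^d$, so $K_0$ lies in a cube of side $3n-1$. Hence $\Phi_0\le(3n)^d$. This yields
\[
  \tau\le 1+\frac{d!\,3^d}{b_d}\,n^{d-\beta_d}.
\]
Finally, $d-\beta_d=\dfrac{d(d+1)-d(d-1)}{d+1}=\dfrac{2d}{d+1}$, and since $n\ge1$ the additive $1$ is absorbed into the constant $C_d$.

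\textbf{Main obstacle.} The delicate step is ensuring the per-step gap applies at every step. It requires $X_{i+1}\ne\varnothing$ (a hypothesis of Lemma~\ref{lem:expulsion}), and it requires $K_{i+1}$ to be full-dimensional, which holds automatically because of $Z_n$. Because of the first requirement, the last step $i=\tau-1$ is not covered by the argument and is handled by the extra $+1$. All the substantive work is already contained in the lemmas, so what remains is bookkeeping.
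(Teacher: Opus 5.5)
Your proposal is correct and follows the paper's proof essentially step for step. You fix $Z_n$, use Lemmas~\ref{lem:expulsion}, \ref{lem:volume-gap} and \ref{lem:vertex-count} with B\'ar\'any--Larman for the per-step drop $\Phi_i-\Phi_{i+1}\ge b_dn^{\beta_d}/d!$, telescope over the $\tau-1$ transitions between nonempty residual sets, and bound $\Phi_0\le(3n)^d$, which gives the same constants $C_d=1+d!3^d/b_d$ and $N_d=\lceil R_d\rceil$.
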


\begin{proof}
Choose an integer $n\ge\lceil R_d\rceil$, where $R_d$ is given by
Proposition~\ref{prop:ball-hull}, and let $X\subset[n]^d$ be nonempty.  Set
\[
  \tau=\tau(X).
\]
Let $X_0,\ldots,X_\tau$ be the peeling sequence and let
$P_i=\conv(X_i)$ for $0\le i<\tau$.  Fix the same auxiliary polytope
\[
  Z=Z_n=\conv(\Z^d\cap nB^d)
\]
throughout the process, and define
\[
  K_i=P_i+Z,
  \qquad
  \Phi_i=\Vol(K_i).
\]

For $0\le i\le \tau-2$, both $P_i$ and $P_{i+1}$ are nonempty.  Each
$K_j$, $j\in\{i,i+1\}$, is a lattice polytope and contains a translate of
the full-dimensional polytope $Z$.  Thus $K_i$ and $K_{i+1}$ are
full-dimensional, and
\[
  K_{i+1}\subset K_i.
\]
Lemma~\ref{lem:expulsion} says that $K_{i+1}$ contains no vertex of $K_i$.
Lemmas~\ref{lem:volume-gap} and \ref{lem:vertex-count}, together with
Proposition~\ref{prop:ball-hull}, give
\begin{equation}\label{eq:drop}
  \Phi_i-\Phi_{i+1}
  \ge \frac{f_0(K_i)}{d!}
  \ge \frac{f_0(Z)}{d!}
  \ge \frac{b_d}{d!}n^{\beta_d}.
\end{equation}

Summing \eqref{eq:drop} over the $\tau-1$ transitions between nonempty
residual sets gives
\begin{equation}\label{eq:telescope}
  (\tau-1)\frac{b_d}{d!}n^{\beta_d}
  \le \Phi_0-\Phi_{\tau-1}
  \le \Phi_0.
\end{equation}

Since $P_0\subset[1,n]^d$ and $Z\subset[-n,n]^d$,
\[
  P_0+Z\subset[1-n,2n]^d.
\]
Consequently,
\begin{equation}\label{eq:initial-potential}
  \Phi_0\le(3n-1)^d\le(3n)^d.
\end{equation}
Combining \eqref{eq:telescope} and \eqref{eq:initial-potential}, and using
\(
  d-\beta_d=2d/(d+1),
\)
we obtain
\[
  \tau\le 1+\frac{d!3^d}{b_d}n^{2d/(d+1)}
  \le \left(1+\frac{d!3^d}{b_d}\right)n^{2d/(d+1)}.
\]
Thus the assertion holds with
$C_d=1+d!3^d/b_d$ and $N_d=\lceil R_d\rceil$.
\end{proof}

\section{The lower bound}\label{sec:lower}

For completeness, we recall the lower-bound argument from
\cite{AmbrusHsuPengYan}.  We first check that every nonsingleton $P_i$ is
full-dimensional, as required by Proposition~\ref{prop:andrews}.

\begin{lemma}\label{lem:full-dimensional-layers}
Let $d\ge 2$.  Every nonempty residual set $X_i$ in the peeling process of
$[n]^d$ that contains at least two points has full-dimensional affine hull.
Hence the only possible nonempty residual set whose affine hull is not all of
$\R^d$ is a singleton.
\end{lemma}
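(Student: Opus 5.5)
The plan is to use the symmetry group $G$ of the grid $[n]^d$, namely the hyperoctahedral group of order $2^d d!$ acting on $\R^d$ by coordinate permutations and reflections $x_j\mapsto n+1-x_j$. The peeling operation commutes with affine bijections: for such a map $g$ we have $\ext(g(Y))=g(\ext(Y))$. Since $g([n]^d)=[n]^d$ for every $g\in G$, induction on $i$ shows that every residual set $X_i$ is $G$-invariant, and therefore so is its affine hull $A_i=\operatorname{aff}(X_i)$.

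Next, let $c=\frac{n+1}{2}(1,\ldots,1)$ be the centre of the cube; $G$ fixes $c$. Because $X_i$ is finite and $G$-invariant, its centroid is $G$-invariant. The only point of $\R^d$ fixed by all reflections $x_j\mapsto n+1-x_j$ is $c$, so the centroid equals $c$. Since the centroid lies in $A_i$, we get $c\in A_i$. Therefore $W=A_i-c$ is a linear subspace that is invariant under the linear parts of $G$, that is, under the full group of signed permutation matrices.

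The key step is to classify these invariant subspaces. I claim the only ones are $\{0\}$ and $\R^d$. Suppose $W\neq\{0\}$ and take $w\in W$ with some coordinate $w_j\neq0$. The matrix that flips the sign of coordinate $j$ is in $G$, so $w-\mathrm{flip}_j(w)=2w_je_j\in W$, which gives $e_j\in W$. Applying coordinate permutations then gives every $e_k\in W$, so $W=\R^d$. Hence $A_i$ is either the single point $\{c\}$ or all of $\R^d$.

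If $|X_i|\ge2$, then $A_i$ is not a point, so it must be $\R^d$. This proves both assertions. The only step needing care is the equivariance of peeling under $G$ together with the induction. This is routine, because affine bijections preserve convex hulls and vertex sets, so I do not expect a real obstacle here.
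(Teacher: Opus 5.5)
Your proposal is correct and follows essentially the same route as the paper. The steps match: $G$-invariance of every $X_i$, then $c\in\operatorname{aff}(X_i)$, then the sign-flip identity $w-w'=2w_je_j$ and coordinate permutations to force $W=\R^d$. The only cosmetic difference is how you place $c$ in the affine hull: you use the $G$-invariant centroid, whereas the paper uses central inversion, since $c$ is the midpoint of $x$ and $2c-x$.
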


\begin{proof}
Set
\[
  c=\bigl((n+1)/2,\ldots,(n+1)/2\bigr).
\]
Every $X_i$ is invariant under coordinate permutations and reflections in the
coordinate hyperplanes $x_j=(n+1)/2$, $1\le j\le d$, since affine symmetries
commute with taking convex hulls and preserve extreme points.  Central
inversion about $c$ shows that $c\in\operatorname{aff}(X_i)$.  Write
\[
  \operatorname{aff}(X_i)=c+W,
\]
where $W$ is a linear subspace invariant under signed coordinate permutations,
that is, coordinate permutations and independent coordinate sign changes.
If $X_i$ has at least two points, then $W\ne\{0\}$.  Choose
$w\in W$ with $w_j\ne0$, and let $w'$ be obtained from $w$ by changing the
sign of its $j$th coordinate.  Then $w-w'=2w_je_j\in W$.  Coordinate
permutations imply that every standard basis vector belongs to $W$.  Thus
$W=\R^d$.

If $X_i$ is a singleton, invariance under central inversion forces it to be
$\{c\}$.  In particular, this case can occur only when $n$ is odd.
\end{proof}

\begin{theorem}[Ambrus--Hsu--Peng--Yan]\label{thm:lower}
For every fixed $d\ge 2$,
\[
  \tau([n]^d)=\Omega\!\left(n^{2d/(d+1)}\right).
\]
\end{theorem}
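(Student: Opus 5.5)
We prove the lower bound by counting how many grid points can be removed per peeling step. Write $X_0=[n]^d$ and $\tau=\tau([n]^d)$. Every point of $[n]^d$ is removed at exactly one step, so
\[
  n^d=\sum_{i=0}^{\tau-1}|\ext(X_i)|=\sum_{i=0}^{\tau-1}f_0(P_i).
\]
It therefore suffices to show that $f_0(P_i)=O(n^{\beta_d})$ uniformly in $i$. This gives $n^d\le\tau\cdot O(n^{\beta_d})$, and hence $\tau=\Omega(n^{d-\beta_d})=\Omega(n^{2d/(d+1)})$.

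To bound $f_0(P_i)$ we split into cases according to Lemma~\ref{lem:full-dimensional-layers}. If $X_i$ is a singleton, then $f_0(P_i)=1$. Otherwise $X_i$ has at least two points, so $P_i$ is a full-dimensional lattice $d$-polytope; its vertices lie in $X_i\subset\Z^d$. Proposition~\ref{prop:andrews} then gives
\[
  f_0(P_i)\le a_d\Vol(P_i)^{(d-1)/(d+1)}.
\]
Since $P_i\subset[1,n]^d$, we have $\Vol(P_i)\le (n-1)^d\le n^d$. Hence $f_0(P_i)\le a_d n^{d(d-1)/(d+1)}=a_d n^{\beta_d}$.

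Combining the cases, $f_0(P_i)\le \max(1,a_d n^{\beta_d})\le (a_d+1)n^{\beta_d}$ for all $i$ and all $n\ge1$. Therefore
\[
  \tau\ge \frac{n^{d-\beta_d}}{a_d+1}=\frac{n^{2d/(d+1)}}{a_d+1},
\]
which proves the claimed $\Omega$ bound with an explicit constant.

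The argument is essentially routine once the earlier results are in place. The only delicate point is the applicability of Andrews' estimate, which requires full-dimensionality. That is exactly what Lemma~\ref{lem:full-dimensional-layers} supplies, using the symmetry of the full grid. This is also why the lower bound, unlike Theorem~\ref{thm:upper}, is stated only for $[n]^d$ and not for arbitrary subsets. For $n=1$ the bound is trivial, since $\tau=1$.
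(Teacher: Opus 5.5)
Your proof is correct and follows the paper's argument essentially step for step. You use the same point-count identity $n^d=\sum_i f_0(P_i)$, invoke Lemma~\ref{lem:full-dimensional-layers} to justify Andrews' estimate, bound $\Vol(P_i)\le n^d$, and treat the possible singleton layer separately; the only difference is that your constant is $a_d+1$ where the paper uses $\max\{a_d,1\}$.
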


\begin{proof}
Write $\tau=\tau([n]^d)$.  By Lemma~\ref{lem:full-dimensional-layers}, every
nonsingleton $P_i$ is a full-dimensional lattice polytope.  Since
$P_i\subset[1,n]^d$, Proposition~\ref{prop:andrews} gives
\[
  f_0(P_i)
  \le a_d\Vol(P_i)^{(d-1)/(d+1)}
  \le a_dn^{\beta_d}.
\]
After replacing $a_d$ by $\max\{a_d,1\}$, the same estimate also covers a
possible singleton residual set.  Every grid point is deleted exactly once,
and therefore
\[
  n^d
  =\sum_{i=0}^{\tau-1}f_0(P_i)
  \le \tau\max\{a_d,1\}n^{\beta_d}.
\]
It follows that
\[
  \tau\ge \frac{1}{\max\{a_d,1\}}n^{d-\beta_d}
  =\frac{1}{\max\{a_d,1\}}n^{2d/(d+1)}.
\]
\end{proof}

\begin{proof}[Proof of Theorem~\ref{thm:main}]
For $d\ge 2$, combine Theorems~\ref{thm:upper} and \ref{thm:lower}.  We may
take
\[
  c_d=\frac{1}{\max\{a_d,1\}},
  \qquad
  C_d=1+\frac{d!3^d}{b_d},
  \qquad
  N_d=\lceil R_d\rceil.
\]
These quantities depend only on $d$.

For $d=1$, each step removes two endpoints, except possibly for a final
singleton.  Thus $\tau([n]^1)=\lceil n/2\rceil$, and
$c_1=1/2$, $C_1=N_1=1$ suffice.
\end{proof}

\enlargethispage{2\baselineskip}


\begin{thebibliography}{99}

\bibitem{AmbrusHsuPengYan}
G.~Ambrus, A.~Hsu, B.~Peng, and S.~Yan.
The layer number of grids.
\arxiv{2009.13130}, 2020.

\bibitem{AmbrusNielsenWilson}
G.~Ambrus, P.~Nielsen, and C.~Wilson.
New estimates for convex layer numbers.
\emph{Discrete Math.}, 344(7):112424, 2021.
\doi{10.1016/j.disc.2021.112424}.

\bibitem{Andrews}
G.~E. Andrews.
A lower bound for the volume of strictly convex bodies with many boundary
lattice points.
\emph{Trans. Amer. Math. Soc.}, 106(2):270--279, 1963.
\doi{10.1090/S0002-9947-1963-0143105-7}.

\bibitem{BaranyLarman}
I.~B\'ar\'any and D.~G. Larman.
The convex hull of the integer points in a large ball.
\emph{Math. Ann.}, 312(1):167--181, 1998.
\doi{10.1007/s002080050217}.

\bibitem{Dalal}
K.~Dalal.
Counting the onion.
\emph{Random Structures \& Algorithms}, 24(2):155--165, 2004.
\doi{10.1002/rsa.10114}.

\bibitem{DillonVaradarajan}
T.~Dillon and N.~Varadarajan.
Explicit bounds for the layer number of the grid.
\arxiv{2302.04244}, 2023.

\bibitem{HarPeledLidicky}
S.~Har-Peled and B.~Lidick\'y.
Peeling the grid.
\emph{SIAM J. Discrete Math.}, 27(2):650--655, 2013.
\doi{10.1137/120892660}.

\bibitem{RoteRuberSaghafian}
G.~Rote, M.~R\"uber, and M.~Saghafian.
Grid peeling of parabolas.
\emph{Leibniz Int. Proc. Inform.}, 293:76:1--76:18, 2024.
\doi{10.4230/LIPIcs.SoCG.2024.76}.

\end{thebibliography}
\end{document}